\documentclass[12pt,reqno]{amsart}

\usepackage{fixltx2e} 
\usepackage[utf8]{inputenc} 
\usepackage{amssymb, latexsym, stmaryrd, amsthm, dsfont, amsfonts, amsbsy, amsmath, mathrsfs} 
\usepackage{mathtools} 
\usepackage{bm, bbm} 
\usepackage{enumerate} 
\usepackage{verbatim} 
\usepackage{url}   
\usepackage{lscape} 
\usepackage{centernot}
\usepackage[dvipsnames]{xcolor}
\usepackage[colorinlistoftodos]{todonotes} 
\usepackage{nicefrac} 
\usetikzlibrary{calc}
\usepackage{todonotes}

\usepackage{microtype,tikz,tikz-cd}  
\makeatletter 
\def\MT@register@subst@font{\MT@exp@one@n\MT@in@clist\font@name\MT@font@list
 \ifMT@inlist@\else\xdef\MT@font@list{\MT@font@list\font@name,}\fi}
\makeatother

\makeatletter 
\newcommand{\myitem}[1]{%
\item[(#1)]\protected@edef\@currentlabel{#1}%
}
\makeatother

\usepackage[pdftex,bookmarks,bookmarksnumbered,linktocpage,   
         colorlinks,linkcolor=blue,citecolor=blue]{hyperref}
         
\usepackage[capitalize,noabbrev]{cleveref} 

\newcommand{\bit}{\begin{itemize}}    
\newcommand{\eit}{\end{itemize}}
\newcommand{\ben}{\begin{enumerate}}
\newcommand{\een}{\end{enumerate}}

\newcommand{\bde}{\begin{description}}
\newcommand{\ede}{\end{description}}

\theoremstyle{theorem}
\newtheorem{Theorem}{Theorem}[section]
\newtheorem{Theorem-n}{Theorem}

\newtheorem{Proposition}[Theorem]{Proposition}
\newtheorem{Modal Sahlqvist Theorem}[Theorem]{Modal Sahlqvist Theorem}
\newtheorem{Intuitionistic Sahlqvist Theorem}[Theorem]{Intuitionistic  Sahlqvist Theorem}
\newtheorem{Esakia Duality}[Theorem]{Esakia Duality}

\newtheorem{Main Lemma}[Theorem]{Main Lemma}
\newtheorem{Compactness Theorem}[Theorem]{Compactness Theorem}
\newtheorem{Los Theorem}[Theorem]{\LL o\'s' Theorem}
\newtheorem{Isbell Theorem}[Theorem]{Isbell's Zigzag Theorem}
\newtheorem{Diagram Lemma}[Theorem]{Diagram Lemma}

\newtheorem{Transfer Lemma}[Theorem]{Transfer Lemma}
\newtheorem{Subdirect Decomposition Theorem}[Theorem]{Subdirect Decomposition Theorem}

\newtheorem{Corollary}[Theorem]{Corollary}
\newtheorem{Claim}[Theorem]{Claim}

\theoremstyle{definition}

\theoremstyle{remark}

\newtheorem{Remark}[Theorem]{Remark}

\let\leq=\leqslant
\let\nleq=\nleqslant

\usepackage[mathscr]{euscript}
 \let\mathscr\relax 
\usepackage[scr]{rsfso}

\renewcommand{\int}{\mathsf{int}\,}

\bmdefine{\A}{A} 
\bmdefine{\C}{C}                                
                              
\bmdefine{\2}{2}
\bmdefine{\B}{B}
\bmdefine{\D}{D}
\bmdefine{\E}{E}
\bmdefine{\Luk}{L}
\bmdefine{\Term}{T} 
\bmdefine{\Free}{F}
\bmdefine{\Fb}{F}

\usepackage{scalerel}

\newcommand{\up}{{\uparrow}}

\let\LL\L 
\renewcommand{\L}{\mathscr{L}}

\DeclareUnicodeCharacter{001B}{}

\begin{document}

\title{Bounded depth in Hilbert algebras}

\author{Luca Carai, Miriam Kurtzhals, and Tommaso Moraschini}



\begin{abstract}
  \emph{Hilbert algebras} are the implicative subreducts of Heyting algebras. It is shown that having depth $\leq n$ is an equational condition in Hilbert algebras. This generalizes an analogous well-known result in the setting of Heyting algebras. 
\end{abstract}
\date{\today}

\maketitle

\section{Hilbert algebras and implicative filters}

An algebra $\langle A; \to \rangle$ is
said to be a \emph{Hilbert algebra} 
when it is an implicative subreduct of 
a Heyting algebra (see, e.g., \cite{Di65,He50,Ho62}). In every Hilbert 
algebra $\A$ we have $a \to a = b \to b$ for all $a, b \in A$. Therefore, we
can define a constant $1 = a \to a$ for $a \in A$. In addition, the binary relation $\leq$ on $A$ defined by the rule
\[
a \leq b \iff a \to b = 1
\]
is a partial order on $A$ with maximum $1$.

Let $\A$ be a Hilbert algebra. A set $F \subseteq A$ is an \emph{implicative filter} of $\A$ when $1 \in F$ and for 
all $a, b \in A$,
\[
\text{if }(a \in F \text{ and }a \to b \in F)\text{, then }b \in F.
\]
When ordered under inclusion, the set of implicative filters of $\A$ forms a distributive lattice that we denote by $\mathsf{Fi}(\A)$ (see, e.g., \cite[Prop.\ 2.9]{CeJa}). Every implicative filter of $\A$ is an upset (see, e.g., \cite[Lem.\ 2.8(1)]{CeJa}). Moreover, for every $a \in A$ the principal upset 
\[
{\uparrow}a = \{ b \in A : a \leq b \}
\]
is an implicative filter of $\A$ (see, e.g., \cite[p.\ 192]{CeJa}).  We denote by $\mathsf{Fg}^\A(-)$ the closure operator of implicative filter generation on $\A$ (see \cite[Lem.\ 2.3]{MR811800}).

\begin{Proposition}\label{Prop : generation}
Let $\A$ be a Hilbert algebra and $X \subseteq A$. Then
\[
\mathsf{Fg}^\A(X) = \{ a \in A : a = 1 \text{ or }b_1 \to ( b_2 \to ( \dots (b_n \to a)\dots )) = 1 \text{ for some }b_1, \dots, b_n \in X\}.
\]
\end{Proposition}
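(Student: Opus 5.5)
Write $Y$ for the right-hand side and, for $b_1,\dots,b_n\in A$, abbreviate $b_1\to(b_2\to(\dots(b_n\to a)\dots))$ by $\vec b\to a$. The plan is to prove the two inclusions $Y\subseteq \mathsf{Fg}^\A(X)$ and $\mathsf{Fg}^\A(X)\subseteq Y$. The second reduces to showing that $Y$ is an implicative filter containing $X$, because $\mathsf{Fg}^\A(X)$ is the least such filter. The only facts about Hilbert algebras used are Heyting identities, which transfer because $\A$ is an implicative subreduct of a Heyting algebra: $1\to a=a$, $a\to 1=1$, $a\le b\to a$, exchange $a\to(b\to c)=b\to(a\to c)$, and that $\to$ is antitone in the first and monotone in the second argument.

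For $Y\subseteq \mathsf{Fg}^\A(X)$, let $F$ be any implicative filter containing $X$ and suppose $\vec b\to a=1$ with $b_i\in X$. Since $1\in F$ and $b_1\in F$, closure under modus ponens gives $b_2\to(\dots(b_n\to a)\dots)\in F$. Iterating $n$ times yields $a\in F$. The case $a=1$ is trivial.

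For the reverse inclusion, first note $1\in Y$ and $X\subseteq Y$, since $b\to b=1$ for $b\in X$. The main step is closure under modus ponens. Suppose $a\in Y$ and $a\to c\in Y$, and consider the following cases.
\begin{enumerate}[(i)]
\item If $a=1$, then $c=1\to c=a\to c\in Y$.
\item If $a\to c=1$, then $a\le c$. From $\vec b\to a=1$, monotonicity in the consequent gives $\vec b\to c=1$. If $a=1$ as well, then $c=1$.
\item Otherwise $\vec b\to a=1$ and $\vec d\to(a\to c)=1$. I claim that $\vec b\to(\vec d\to c)=1$, which puts $c\in Y$ with witness list $b_1,\dots,b_n,d_1,\dots,d_m$.
\end{enumerate}

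To prove the claim in case (iii), pass to an ambient Heyting algebra. There $\vec b\to x$ equals $(b_1\wedge\dots\wedge b_n)\to x$, so with $\beta=\bigwedge b_i$ and $\delta=\bigwedge d_j$ the hypotheses say $\beta\le a$ and $\delta\wedge a\le c$. Hence $\beta\wedge\delta\le c$, that is, $\vec b\to(\vec d\to c)=1$. The resulting element lies in $A$ because $A$ is closed under $\to$, and the identity holds in the Heyting algebra, hence in $\A$.

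The main obstacle is this last step if one insists on a purely Hilbert-algebraic argument. In that setting one would prove by induction on $n$ that $\vec b\to(a\to c)\le \vec b\to a\to(\vec b\to c)$, using the axiom $x\to(y\to z)\le(x\to y)\to(x\to z)$. The detour through the Heyting algebra avoids this induction. It is legitimate since the statement only involves $\to$ and $1$, and the embedding preserves these operations and the order.
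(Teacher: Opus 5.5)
Your proof is correct. The iterated modus ponens argument gives $Y\subseteq\mathsf{Fg}^\A(X)$, and your case analysis shows that $Y$ is an implicative filter containing $X$. In case (iii), the computation in an ambient Heyting algebra ($\beta\le a$ and $\delta\wedge a\le c$, hence $\beta\wedge\delta\le c$) is sound. It transfers back to $\A$ because, under the paper's definition, $\A$ is a $\to$-subreduct of a Heyting algebra. There is no in-paper proof to compare with: the paper states this proposition without proof, next to a citation to the Hilbert algebra literature. Right afterwards, it uses the same Heyting-algebra device you use, collapsing nested implications into an implication from a meet, to derive Corollary~\ref{Cor : generation}.

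The obstacle you anticipate in your last paragraph for a purely implicational argument is not real. The tools you listed at the outset already remove it. By repeated exchange, $\vec d\to(a\to c)=a\to(\vec d\to c)$, so the second hypothesis in case (iii) says $a\le\vec d\to c$. Iterated monotonicity in the consequent then gives $1=\vec b\to a\le\vec b\to(\vec d\to c)$. This needs neither meets nor any induction on self-distributivity.

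The Heyting detour buys a transparent one-line computation. The exchange-plus-monotonicity version works directly from the equational axioms of Hilbert algebras, once those two laws are derived from them. It therefore avoids relying on the fact that every Hilbert algebra embeds into the implicative reduct of a Heyting algebra, which is nontrivial if one starts from the axioms.
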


    From Proposition \ref{Prop : generation} and the fact that Hilbert algebras are subreducts of Heyting algebras we deduce the following.

\begin{Corollary}\label{Cor : generation}
Let $\A$ be a Hilbert algebra and $X \cup \{ c \} \subseteq A$.  Then
\begin{align*}
     \mathsf{Fg}^\A(X \cup \{ c \}) & = \{ a \in A : a = 1 \text{ or }b_1 \to ( b_2 \to ( \dots (b_n \to (c \to a))\dots )) = 1\\
     & \, \, \, \, \, \, \, \, \text{ for some }b_1, \dots, b_n \in X\}.
\end{align*}
\end{Corollary}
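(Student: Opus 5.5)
We apply Proposition \ref{Prop : generation} to the set $X \cup \{c\}$. An element $a$ lies in $\mathsf{Fg}^\A(X \cup \{ c \})$ iff $a = 1$ or there are $d_1, \dots, d_m \in X \cup \{c\}$ with $d_1 \to (d_2 \to (\dots (d_m \to a)\dots)) = 1$. The task is to show that such a chain can be rearranged and pruned into the special form in the statement: all elements of $X$ first, followed by exactly one occurrence of $c$.

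To carry out the rearrangement, we work in a Heyting algebra $\B$ of which $\A$ is an implicative subreduct. This is legitimate because both $\to$ and the order $\leq$ on $\A$ agree with those computed in $\B$. In $\B$ we have
\[
d_1 \to (d_2 \to (\dots (d_m \to a)\dots)) = (d_1 \wedge \dots \wedge d_m) \to a,
\]
so the iterated implication equals $1$ iff $d_1 \wedge \dots \wedge d_m \leq a$. Meets are commutative, associative and idempotent, so we may reorder the $d_i$ and merge repeated copies of $c$. We may also add a copy of $c$ if none occurs: if $d_1 \wedge \dots \wedge d_m \leq a$, then $d_1 \wedge \dots \wedge d_m \wedge c \leq a$ as well.

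Accordingly, let $b_1, \dots, b_n$ list the $d_i$ belonging to $X$ (if some $d_i = c$ also belongs to $X$, it can be treated either way). Then
\[
b_1 \wedge \dots \wedge b_n \wedge c \leq a,
\]
which, translated back, says $b_1 \to (\dots (b_n \to (c \to a))\dots) = 1$. Every term here lies in $A$, so this identity holds in $\A$. This proves the inclusion from left to right. For the converse, an element $a$ satisfying the displayed condition witnesses membership directly via Proposition \ref{Prop : generation}, since $b_1, \dots, b_n, c \in X \cup \{c\}$.

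The step needing most care is the degenerate case. When all $d_i$ equal $c$ (including $m=0$, which forces $a=1$), we get $n = 0$ and the condition reduces to $c \to a = 1$. The statement's ``for some $b_1,\dots,b_n$'' must be read as allowing $n=0$, as Proposition \ref{Prop : generation} itself implicitly does.
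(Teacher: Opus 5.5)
Your proof is correct and is the argument the paper intends: apply Proposition \ref{Prop : generation} to $X \cup \{c\}$, then work in an ambient Heyting algebra and use $d_1 \to (\dots (d_m \to a)\dots) = (d_1 \wedge \dots \wedge d_m) \to a$ to reorder the $d_i$, merge repeated copies of $c$, and insert $c$ if it is missing. You are right that $n = 0$ must be allowed, though the reason is not Proposition \ref{Prop : generation} (there $n=0$ only duplicates the disjunct $a=1$); it is that for $X = \emptyset$ the filter $\mathsf{Fg}^\A(\{c\}) = {\uparrow}c$ would otherwise be wrongly described as $\{1\}$.
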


Let $\A$ be a Hilbert algebra. With every $F \in \mathsf{Fi}(\A)$ we associate a congruence of $\A$ as follows:
\[
\theta_F = \{ \langle a, b \rangle \in A \times A : a \to b, b \to a \in F \}.
\]
The quotient algebra $\A / \theta_F$ is a Hilbert algebra that we denote by $\A / F$. Similarly, we denote the equivalence class of an element $a$ of $A$ under $\theta_F$ by $a / F$. The map $F \longmapsto \theta_F$ is a lattice isomorphism from $\mathsf{Fi}(\A)$ to the congruence lattice of $\A$. Because of this, the following is a consequence of the Correspondence Theorem (see, e.g., \cite[Thm.\ II.6.20]{BuSa00}).

\begin{Theorem}\label{Thm : correspondence}
    Let $\A$ be a Hilbert algebra and $F \in \mathsf{Fi}(\A)$. Then there exists an isomorphism $h \colon {\uparrow}F \to \mathsf{Fi}(\A / F)$, where ${\uparrow}F$ is the subposet of $\mathsf{Fi}(\A)$ with universe $\{ G \in \mathsf{Fi}(\A) : F \subseteq G \}$.
\end{Theorem}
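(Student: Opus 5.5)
We transport the Correspondence Theorem for congruences along the lattice isomorphism $F \longmapsto \theta_F$ from $\mathsf{Fi}(\A)$ to the congruence lattice $\mathsf{Con}(\A)$. The Correspondence Theorem \cite[Thm.\ II.6.20]{BuSa00} states that for every congruence $\theta$ of $\A$, the map $\phi \longmapsto \phi/\theta = \{\langle a/\theta, b/\theta\rangle : \langle a,b\rangle \in \phi\}$ is an isomorphism from the interval $[\theta, \nabla]$ of $\mathsf{Con}(\A)$ onto $\mathsf{Con}(\A/\theta)$.

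Take $\theta = \theta_F$. Since $F \longmapsto \theta_F$ is an order isomorphism $\mathsf{Fi}(\A) \to \mathsf{Con}(\A)$, it restricts to an isomorphism from ${\uparrow}F$ onto the interval $[\theta_F, \nabla]$: indeed $F \subseteq G$ iff $\theta_F \subseteq \theta_G$, and every congruence above $\theta_F$ is of the form $\theta_G$ for some $G$, which then satisfies $G \supseteq F$. Because $\A/F = \A/\theta_F$ is again a Hilbert algebra, the same isomorphism theorem applied to $\A/F$ gives an isomorphism $\mathsf{Fi}(\A/F) \to \mathsf{Con}(\A/F)$, whose inverse we call $\psi$. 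We then set
\[
h(G) = \psi(\theta_G/\theta_F),
\]
a composition of three isomorphisms, and hence an isomorphism ${\uparrow}F \to \mathsf{Fi}(\A/F)$.

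To make this explicit, one checks that $h(G) = \{a/F : a \in G\}$. First, $G$ is a union of $\theta_F$-classes: if $a \in G$ and $a \to b \in F \subseteq G$, then $b \in G$. Second, the filter corresponding to a congruence $\phi$ is its $1$-class, since $a \mathrel{\theta_G} 1$ iff $1 \to a = a \in G$ and $a \to 1 = 1 \in G$. Hence the $1$-class of $\theta_G/\theta_F$ is exactly $\{a/F : a \in G\}$.

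The only step that needs care is the claim that the correspondence $F \longmapsto \theta_F$ sends a filter to a congruence whose $1$-class is that filter. It uses the identities $1 \to a = a$ and $a \to 1 = 1$, which hold in Hilbert algebras because they hold in Heyting algebras. The rest of the argument is formal.
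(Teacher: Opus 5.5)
Your proposal is correct and follows the paper's route: the paper obtains this theorem by transporting the Correspondence Theorem along the lattice isomorphism $F \longmapsto \theta_F$ from $\mathsf{Fi}(\A)$ onto the congruence lattice, exactly as you do. Your explicit description $h(G) = \{a/F : a \in G\}$, obtained via $1$-classes, is a correct refinement that the paper does not spell out.
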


We recall that an element of lattice $\A$ is said to be \emph{meet irreducible} when it is neither the maximum of $\A$ nor it can be written as the meet of two larger elements. Given a Hilbert algebra $\A$, we denote the poset of meet irreducible elements of $\mathsf{Fi}(\A)$ by $\A_*$. Owing to the distributivity of $\mathsf{Fi}(\A)$, the poset  $\A_*$ coincides with that of  meet prime elements of $\mathsf{Fi}(\A)$ (see, e.g., \cite[p.\ 192]{CeJa}). We will make use of the following observation (see, e.g., \cite[Prop.\ 2.10(1)]{CeJa}).

\begin{Proposition}\label{Prop : separation}
    Let $\A$ be a Hilbert algebra, $a \in A$, and $F \in \mathsf{Fi}(\A)$. If $a \notin F$, there exists $G \in \A_*$ such that $F \subseteq G$ and $a \notin G$.
\end{Proposition}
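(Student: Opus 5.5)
We argue by the standard Zorn's lemma construction of a filter that is maximal with respect to omitting $a$, and then show that such a filter is meet irreducible in $\mathsf{Fi}(\A)$.

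\emph{Existence of a maximal filter.} Consider the family
\[
\mathcal{S} = \{ H \in \mathsf{Fi}(\A) : F \subseteq H \text{ and } a \notin H \},
\]
partially ordered by inclusion. It is nonempty because $F \in \mathcal{S}$. The union of a nonempty chain in $\mathcal{S}$ is again in $\mathcal{S}$. Indeed, it contains $1$ and $F$, and it omits $a$. It is also closed under modus ponens: if $b$ and $b \to c$ lie in the union, then they both lie in a single member of the chain, so $c$ does too. By Zorn's lemma, $\mathcal{S}$ has a maximal element $G$.

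\emph{$G$ is meet irreducible.} Since $a \notin G$, we have $G \neq A$, so $G$ is not the maximum of $\mathsf{Fi}(\A)$. Suppose $G = H_1 \cap H_2$ with $H_1, H_2 \in \mathsf{Fi}(\A)$ both strictly containing $G$. By maximality of $G$ in $\mathcal{S}$, neither $H_1$ nor $H_2$ belongs to $\mathcal{S}$. Both contain $F$, so necessarily $a \in H_1$ and $a \in H_2$. Then $a \in H_1 \cap H_2 = G$, a contradiction.

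Note that the meet in $\mathsf{Fi}(\A)$ is intersection, since an intersection of implicative filters is clearly an implicative filter. Hence $G \in \A_*$, $F \subseteq G$, and $a \notin G$, as required.

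The only delicate step is the chain argument. It works because the closure conditions defining implicative filters are finitary: each involves at most two elements. Consequently, unions of chains of filters are filters, and no appeal to Proposition \ref{Prop : generation} or Corollary \ref{Cor : generation} is needed. Distributivity of $\mathsf{Fi}(\A)$ is also not used; it is relevant only for identifying meet irreducible filters with meet prime ones.
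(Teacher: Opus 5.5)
Your proof is correct. The paper does not prove this statement itself but cites it from \cite[Prop.\ 2.10(1)]{CeJa}, and your argument is the standard proof of this fact: use Zorn's lemma to get a filter maximal among those containing $F$ and omitting $a$, then note that any two strictly larger filters must both contain $a$.
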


Since principal upsets of Hilbert algebras are always implicative filters, the following is an immediate consequence of Proposition \ref{Prop : separation}.

\begin{Corollary}\label{Cor : separation}
  Let $\A$ be a Hilbert algebra, $a \in A$, and $a, b \in A$.   If $a \nleq b$, there exists $F \in \A_*$ such that $a \in F$ and $b \notin F$.
\end{Corollary}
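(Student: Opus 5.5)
We apply Proposition \ref{Prop : separation} to the principal upset generated by $a$. Since $a \nleq b$, the element $b$ does not lie in ${\uparrow}a = \{ c \in A : a \leq c \}$. As recalled above, ${\uparrow}a$ is an implicative filter of $\A$ for every $a \in A$, so ${\uparrow}a \in \mathsf{Fi}(\A)$.

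Proposition \ref{Prop : separation}, applied to the filter $F' = {\uparrow}a$ and the element $b \notin F'$, yields some $F \in \A_*$ with ${\uparrow}a \subseteq F$ and $b \notin F$. Since $a \leq a$, we have $a \in {\uparrow}a \subseteq F$. Hence $F$ is a meet irreducible implicative filter containing $a$ but not $b$, as required.

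There is no real obstacle here: all the work is done by Proposition \ref{Prop : separation}. The only points to check are that $b \notin {\uparrow}a$ is exactly the hypothesis $a \nleq b$, and that ${\uparrow}a$ is indeed an implicative filter, which is the fact quoted from \cite{CeJa}. The redundant phrase ``$a \in A$, and $a,b \in A$'' in the statement plays no role.
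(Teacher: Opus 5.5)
Your proof is correct and is exactly the paper's argument: the paper deduces this corollary directly from Proposition \ref{Prop : separation}, using that the principal upset ${\uparrow}a$ is an implicative filter. That filter omits $b$ precisely because $a \nleq b$, and it contains $a$.
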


\begin{Remark}\label{Rem : filters}
An algebra $\langle A; \land, \to \rangle$ is
said to be an \emph{implicative semilattice} 
when it is a $\langle \land, \to \rangle$-subreduct of 
a Heyting algebra (see, e.g., \cite{Kh81}). Let $\A$ be an implicative semilattice (resp.\ a Heyting algebra) and $\A_\to$ its Hilbert algebra reduct. Then $\mathsf{Fi}(\A_\to)$ coincides with the lattice of filters of $\A$ and, consequently, $(\A_\to)_*$ coincides with the poset of meet irreducible filters of $\A$. We recall that the meet irreducible filters of a Heyting algebra are precisely its prime filters. Therefore, in the case where $\A$ is a Heyting algebra,  $(\A_\to)_*$ is the poset of prime filters of $\A$.
\qed
\end{Remark}

\section{The main result}

We say that a Hilbert algebra $\A$ has \emph{depth} $\leq n$ for $n \in \mathbb{N}$ when $\A_*$ does not contain any chain of length $n+1$. We will show that this condition can be described equationally.\footnote{For Heyting algebras, this is well known (see, e.g., \cite[p.\ 43]{ChZa97}).} To this end, for every $n \in \mathbb{N}$ we define recursively a formula $d_n(x_0, \dots, x_n)$ as follows:
    \begin{align*}
        d_0(x_0) &= x_0;\\
    d_{n+1}(x_0, \dots, x_{n+1}) &= ((x_{n+1} \to d_n(x_0, \dots, x_n)) \to x_{n+1}) \to x_{n+1}.
    \end{align*}

    \begin{Theorem}\label{Thm : depth}
    Let $\A$ be a Hilbert algebra and $n \in \mathbb{N}$. Then $\A$ has depth $\leq n$ iff  $\A \vDash d_n \approx 1$. 
\end{Theorem}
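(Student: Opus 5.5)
We prove both directions by induction on $n$. The induction runs through a single lemma relating $d_{n+1}$ to quotients. For $F \in \A_*$, let $\mathrm{ht}(F)$ be the length of the longest chain in $\A_*$ starting at $F$ and going upward, i.e.\ the largest $k$ such that $F = F_0 \subsetneq F_1 \subsetneq \dots \subsetneq F_k$ with all $F_i \in \A_*$. We will show the following by induction on $n$:
\[
\text{for all } a_0,\dots,a_n \in A \text{ and } F \in \A_* \text{ with } \mathrm{ht}(F) \leq n, \quad d_n(a_0,\dots,a_n) \in F,
\]
and conversely that a chain of length $n+1$ yields elements with $d_n(a_0,\dots,a_n) \neq 1$. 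Granting this, the theorem follows. If $\A$ has depth $\leq n$, every $F \in \A_*$ has $\mathrm{ht}(F) \leq n$, so $d_n(\vec a) \in \bigcap \A_*$. Then $d_n(\vec a) = 1$, since otherwise $1 \nleq d_n(\vec a)$ and Corollary \ref{Cor : separation} gives some $F \in \A_*$ omitting it.

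\textbf{Soundness step.} The case $n=0$ is where the argument really happens. Take $F \in \A_*$ maximal in $\A_*$. We claim $F = A$ is impossible, and that in fact $F$ is a maximal proper filter. Now let $a_0 \notin F$ and let $G = \mathsf{Fg}^\A(F \cup \{a_0\})$. By Proposition \ref{Prop : separation}, if $G \neq A$ it would lie below some element of $\A_*$ strictly above $F$, which contradicts maximality. So $G = A$, and we need to conclude something. Hence for $n=0$ the correct statement is that $\A \vDash x_0 \approx 1$ iff $\A_*$ is empty, i.e.\ $A$ is trivial, which is immediate from Corollary \ref{Cor : separation}.

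For the inductive step, suppose $\mathrm{ht}(F) \leq n+1$ and, for contradiction, that $d_{n+1}(\vec a) \notin F$. Write $b = a_{n+1}$ and $c = d_n(a_0,\dots,a_n)$. Using Corollary \ref{Cor : generation}, $((b\to c)\to b)\to b \notin F$ means $b \notin \mathsf{Fg}^\A(F \cup \{(b\to c)\to b\})$. Proposition \ref{Prop : separation} then gives $G \in \A_*$ with $F \subseteq G$, $(b\to c)\to b \in G$ and $b \notin G$. Since $b \notin G$, we get $b \to c \notin G$. Applying Corollary \ref{Cor : generation} and Proposition \ref{Prop : separation} again yields $H \in \A_*$ with $G \subseteq H$, $b \in H$ and $c \notin H$. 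Here $b \in H \setminus G$, so $H \supsetneq G \supseteq F$, and therefore $\mathrm{ht}(H) \leq n$. The induction hypothesis gives $c \in H$, a contradiction.

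\textbf{Completeness step.} Given a chain $F_0 \subsetneq \dots \subsetneq F_{n}$ in $\A_*$, we build $a_0,\dots,a_n$ with $d_n(\vec a) \notin F_0$. Choose $a_0 \notin F_n$ and $a_{i} \in F_{i} \setminus F_{i-1}$ for $i \geq 1$. We then prove by induction on $k$ that $d_k(a_0,\dots,a_k) \notin F_{n-k}$, working upward from index $n$. This needs a lemma: if $G \in \A_*$, $b \notin G$, and there is $H \in \A_*$ with $H \supseteq G$, $b \in H$, $c \notin H$, then $((b\to c)\to b)\to b \notin G$. That in turn needs $(b \to c) \to b \in G$, which does not follow for arbitrary $G$, so the indexing must be arranged carefully. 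The plan is to reverse roles: choose $a_k \in F_{n-k+1}\setminus F_{n-k}$ and use meet primality of $G$ to control $b\to c$. This lemma is the main obstacle.

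To resolve it, we will pass to the Heyting algebra setting. Embed $\A$ into a Heyting algebra, or use its Esakia-type dual, the up-sets of $\A_*$ via $a \mapsto \{F : a \in F\}$. There, $d_n$ is evaluated on the poset $\A_*$ exactly as in the classical Heyting computation (\cite[p.\ 43]{ChZa97}). The valuation $a_i \mapsto$ clopen-like up-sets generated by the chain can be realized by elements of $A$ using Corollary \ref{Cor : separation}.
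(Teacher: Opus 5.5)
Your left-to-right direction is the paper's argument, localized: instead of passing to $\A/\mathsf{Fg}^\A(F_0\cup\{a_n\})$ and lifting a chain through Theorem~\ref{Thm : correspondence}, you go from $G$ straight to a meet irreducible $H\supseteq G\cup\{b\}$ omitting $c$. That step is fine. The lemma you set up, however, is off by one and false as stated. It fails not only for $n=0$ (which you noticed) but also for $n=1$: in the three-element chain $0<a<1$ one has $d_1(0,a)=a\notin\{1\}$ although $\mathrm{ht}(\{1\})=1$. You never restate it. What your inductive step actually proves is this: if $F\in\A_*$ and $d_n(\vec a)\notin F$, then there is a chain $F\subsetneq F_1\subsetneq\dots\subsetneq F_n$ in $\A_*$. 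In this form the base case is trivial, and the step goes through verbatim: prepend $F$ to the chain starting at $H$, using $F\subseteq G\subsetneq H$. Corollary~\ref{Cor : separation} then turns $d_n(\vec a)<1$ into a chain of $n+1$ elements. Also, $\mathrm{ht}$ may be infinite, so ``largest $k$'' need not exist.

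The genuine gap is the right-to-left direction: you never exhibit elements of $A$ refuting $d_n$. Your suspicion about the naive choice $a_0\notin F_n$, $a_i\in F_i\setminus F_{i-1}$ is justified. In the up-sets of the poset with $x_0<x_1$ and $x_0<y$, the choice $a_1=\{x_1\}$, $a_0=\{y\}$ gives $d_1(a_0,a_1)=1$. The detour through Heyting algebras does not repair this, for two reasons. First, embedding $\A$ into a Heyting algebra only yields refuting elements of the larger algebra, not of $A$. Second, under $a\mapsto\{F\in\A_*: a\in F\}$, the up-sets used in the Kripke refutation (e.g.\ $\{F\in\A_*: F\nsubseteq F_{n-i}\}$) need not be images of elements of $A$. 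Corollary~\ref{Cor : separation} separates a single pair $a\nleq b$ and cannot enforce the implicational constraints the refutation needs.

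The missing idea is to strengthen the induction so that it produces a chain $a_0<\dots<a_n<1$ that is a subuniverse, i.e.\ $a_j\to a_i=a_i$ for $i<j$, with $a_n\notin F_0$. Then $d_n(a_0,\dots,a_n)=a_n\neq 1$ by direct computation. Your instinct to use meet primality is right, but it must be applied to a well-chosen filter. Suppose such a chain has been built for $F_1\subsetneq\dots\subsetneq F_n$ with $a_{n-1}\notin F_1$. Let $K=\{e\in A: a_{n-1}\leq e \text{ and } e\to a_{n-1}=a_{n-1}\}$, which is an implicative filter. For $b\in F_1\setminus F_0$ one checks ${\uparrow}b\cap{\uparrow}(b\to a_{n-1})\subseteq K$. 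So if $K\subseteq F_0$, meet primeness of $F_0$ gives $b\in F_0$ or $b\to a_{n-1}\in F_0\subseteq F_1$; the former is false and the latter forces $a_{n-1}\in F_1$. Hence $K\nsubseteq F_0$, and any $a_n\in K\setminus F_0$ extends the chain, since $a_n\to a_i\leq a_{n-1}\to a_i=a_i$ for $i<n-1$.
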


\begin{proof}
We begin by proving the implication from left to right. To this end, we reason by contraposition. We will prove by induction on $n$ that $\A \nvDash d_n \thickapprox 1$ implies that $\A$ is not of depth $\leq n$ for every Hilbert algebra $\A$.

In the base case, $n = 0$. Consider a Hilbert algebra such that $\A \nvDash d_0 \thickapprox 1$. Then there exists $a_0 \in A$ such that $a_0 = d_0(a) < 1$. By Corollary \ref{Cor : separation} there exists $F \in \A_*$ such that $1 \in F$ and $a_0 \notin F$. Consequently, $\A_*$ is nonempty. It follows that $\A$ is not of depth $\leq 0$, as desired. 

For the inductive step, consider a Hilbert algebra $\A$ such that $\A\nvDash d_n \thickapprox 1$. Moreover, the inductive hypothesis ensures that Hilbert algebras $\B$ such that $\B \nvDash d_{n-1} \approx 1$ are not of depth $\leq n-1$. Since $\A\nvDash d_n \thickapprox 1$,  there exist $a_0, \dots, a_n \in A$ such that $d_n(a_0, \dots, a_n)  < 1$. 
By the definition of $d_n$ this implies 
\begin{equation}\label{Eq : first step 1}
    ((a_n \to d_{n-1}(a_0, \dots, a_{n-1})) \to a_n) \to a_n < 1.
\end{equation}
To improve readability, from now on we write
\[
b = d_{n-1}(a_0, \dots, a_{n-1}).
\]

From (\ref{Eq : first step 1}) it follows that 
$(a_n \to b) \to a_n \nleq a_n$. By Corollary \ref{Cor : separation} there exists $F_0 \in \A_*$ such that $(a_n \to b) \to a_n \in F_0$ and $a_n \notin F_0$. Define $F = \mathsf{Fg}^\A(F_0  \cup \{a_n\})$.

\begin{Claim}\label{Claim : n notin F}
We have $b \notin F$.
\end{Claim}

\begin{proof}[Proof of the Claim]
    Suppose the contrary, with a view to contradiction. Observe that $b \ne 1$. For if $b =1$, from $(a_n \to b) \to a_n \nleq a_n$ and $(a_n \to 1) \to a_n = a_n$ it would follow $a_n \nleq a_n$, which is false. Hence, $b \ne 1$, as desired. Therefore, from Corollary \ref{Cor : generation} it follows that  $c_1 \to ( c_2 \to ( \dots (c_m \to (a_n \to b))\dots )) = 1$  for some $c_1, \dots, c_m \in F_0$. Since $c_1, \dots, c_m \in F_0$ and $c_1 \to ( c_2 \to ( \dots (c_m \to (a_n \to b))\dots )) = 1 \in F_0$, the assumption that $F_0$ is an implicative filter yields $a_n \to b \in F_0$. Together with the assumption that $(a_n \to b) \to a_n \in F_0$, this yields $a_n \in F_0$, a contradiction.  
\end{proof}

Recall that $b = d_{n-1}(a_0, \dots, a_{n-1})$ by definition. Together with Claim \ref{Claim : n notin F}, this yields 
\[
d_{n-1}(a_0/F, \dots, a_{n-1}/F) = d_{n-1}(a_0, \dots, a_{n-1})/F < 1/F,
\]
whence $\A / F \nvDash d_{n-1} \thickapprox 1$. Hence, by the inductive hypothesis $\A/F$ is not of depth $\leq n-1$. Consequently, there exists a chain $G_1 \subsetneq \dots \subsetneq G_n$ in $(\A / F)_*$. 
By  Theorem \ref{Thm : correspondence} this yields a chain $F_1 \subsetneq \dots \subsetneq F_n$ in $\A_*$, with $F \subseteq F_1$.  
Moreover, since $F_0 \subseteq F \subseteq F_1$ and $a_n \in F - F_0 \subseteq F_1 - F_0$, we conclude that $F_0 \subsetneq F_1 \subsetneq F_2 \subsetneq \dots \subsetneq F_n$ is a chain in $\A_*$ witnessing that $\A$ is not of depth $\leq n$, as desired.

Next, we prove the implication from right to left. To this end, we begin by showing by induction on $n$ that if there exists a chain $F_0 \subsetneq \dots \subsetneq F_n$ in $\A_*$, then there exists a chain $a_0 < \dots < a_n < 1$ in $\A$ such that $\{a_0, \dots, a_n,1\}$ is a subuniverse of $\A$ and $a_n \notin F_0$. 
In the base case, $n=0$ and there exists $F_0 \in \A_*$. Then $F_0 \neq A$. Therefore, there exists $a_0 \in A-F_0$.
It follows that $a_0 < 1$ and $\{a_0,1\}$ is a subuniverse of $\A$. For the inductive step, consider a chain $F_0 \subsetneq \dots \subsetneq F_n$ in $\A_*$ with $n > 0$.  Then $F_1 \subsetneq \dots \subsetneq F_n$ is also a chain in $\A_*$. By the inductive hypothesis there exists a chain $a_0 < \dots < a_{n-1} < 1$ in $\A$ such that $\{a_0, \dots, a_{n-1},1\}$ is a subuniverse of $\A$ and $a_{n-1} \notin F_1$. 
Let 
\[
G = \{ b \in A : a_{n-1} \leq b \text{ and } b \to a_{n-1} = a_{n-1}\}.
\]
It is easy to check that $G  \in \mathsf{Fi}(\A)$.

\begin{Claim}
$F_1 \cap G \nsubseteq F_0$.
\end{Claim}

\begin{proof}[Proof of the claim.]
Suppose the contrary, with a view to contradiction. As $F_0$ is meet irreducible in $\mathsf{Fi}(\A)$, it is also meet prime. Therefore, from $F_1 \not \subseteq F_0$ and $F_1 \cap G \subseteq F_0$ it follows that  $G \subseteq F_0$.

Recall that $n > 0$. Then $F_0 \subsetneq F_1$ by assumption. Therefore, there exists $b \in F_1 - F_0$. We will show that $\up b \cap \up (b \to a_{n-1}) \subseteq G$. 
Consider $c \in \up b \cap \up (b \to a_{n-1})$. Then $b \leq c$ and $b \to a_{n-1} \leq c$. It follows that $c \to a_{n-1} \leq b \to a_{n-1}$ and $c \to a_{n-1} \leq (b \to a_{n-1}) \to a_{n-1}$. Therefore, $c \to a_{n-1} \leq a_{n-1}$. As $a_{n-1} \leq c \to a_{n-1}$, we obtain $c \to a_{n-1} = a_{n-1}$. We also have $a_{n-1} \leq b \to a_{n-1} \leq c$. As $c \to a_{n-1} = a_{n-1}$ and $a_{n-1} \leq c$, the definition of $G$ ensures that $c \in G$, as desired.

Since $G \subseteq F_0$ by assumption and $b \cap \up (b \to a_{n-1}) \subseteq G$, we have $\up b \cap \up (b \to a_{n-1}) \subseteq F_0$. 
As $F_0 \in \A_*$, we know that $F_0$ is meet prime. Consequently,  either $\up b \subseteq F_0$ or $\up (b \to a_{n-1}) \subseteq F_0$, and so either $b \in F_0$ or $b \to a_{n-1} \in F_0$. That $b \in F_0$ is impossible because $b \in F_1-F_0$. That $b \to a_{n-1} \in F_0$ is impossible as well. Indeed, $b \to a_{n-1} \in F_0 \subseteq F_1$ and $b \in F_1$ imply $a_{n-1} \in F_1$, which contradicts the inductive hypothesis.     
\end{proof}

The claim above implies that there exists $a_n \in (F_1 \cap G)-F_0$. As $a_n \notin F_0$, we have $a_n <1$. From $a_n \in G$ it follows that $a_{n-1} \leq a_n$ and $a_n \to a_{n-1} = a_{n-1}$. Then $a_{n-1} < a_n$ because otherwise $a_{n-1} = a_n \to a_{n-1} = 1$. It only remains to show that $\{a_0, \dots, a_n,1\}$ is a subuniverse of $\A$. To this end, it suffices to prove that $a_n \to a_i = a_i$ for every $i \leq n-1$. By what we have observed above, $a_n \to a_{n-1} = a_{n-1}$, so we can assume $i < n-1$. 
Since $a_{n-1} \leq a_n$, it follows that $a_n \to a_i \leq a_{n-1} \to a_i = a_i$, where the last equality holds because the chain $a_0 < \dots < a_{n-1} < 1$ forms a subuniverse of $\A$ and this forces $a_{n-1} \to a_i = a_i$. 
As $a_i \leq a_n \to a_i$, we obtain $a_n \to a_i = a_i$ for every $i < n-1$. This concludes the proof by induction. 

To finish the proof, we need to show that if $\A$ is not of depth $\leq n$, then $\A \nvDash d_n \approx 1$. Assume that $\A_*$ contains a chain of size $n+1$. By what we have shown above there is a subalgebra $\A$ of the form $a_0 < \dots < a_n < 1$. 
Moreover, this subalgebra does not validate $d_n \approx 1$ because $d_i(a_0, \dots, a_i) = a_i$ for every $i \leq n$. Hence, we conclude that $\A \nvDash d_n \thickapprox 1$.
\end{proof}

Let $\A$ be an implicative semilattice (resp.\ a Heyting algebra). We say that $\A$ has \emph{depth} $\leq n$ for $n \in \mathbb{N}$ when the poset of  meet irreducible filters (resp.\ prime filters) of $\A$ does not contain any chain of length $n+1$. 

\begin{Corollary}
       Let $\A$ be an implicative semilattice or a Heyting algebra and $n \in \mathbb{N}$. Then $\A$ has depth $\leq n$ iff  $\A \vDash d_n \approx 1$. 
\end{Corollary}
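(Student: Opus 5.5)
The corollary follows from Theorem \ref{Thm : depth} applied to the Hilbert algebra reduct $\A_\to$ of $\A$; there are two translation steps. First, depth is defined the same way for $\A$ and for $\A_\to$. By Remark \ref{Rem : filters}, $\mathsf{Fi}(\A_\to)$ is exactly the lattice of filters of $\A$, and $(\A_\to)_*$ is exactly the poset of meet irreducible filters of $\A$ (the poset of prime filters, when $\A$ is a Heyting algebra). Both posets are ordered by inclusion. So $\A_*$-style chains of length $n+1$ are the same objects on both sides, and $\A$ has depth $\leq n$ iff $\A_\to$ has depth $\leq n$.

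Second, the equation transfers. The term $d_n$ uses only $\to$ and the constant $1$. In an implicative semilattice or Heyting algebra, $1$ is the same element as the constant $a \to a$ defined in $\A_\to$. Hence, for all $a_0, \dots, a_n \in A$, the value of $d_n(a_0, \dots, a_n)$ computed in $\A$ equals its value computed in $\A_\to$. It follows that $\A \vDash d_n \approx 1$ iff $\A_\to \vDash d_n \approx 1$.

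To finish, we check that $\A_\to$ is a Hilbert algebra. It is the $\to$-reduct of a $\langle \land, \to\rangle$-subreduct of a Heyting algebra (or of a Heyting algebra itself), so it is an implicative subreduct of a Heyting algebra. Theorem \ref{Thm : depth} then gives: $\A_\to$ has depth $\leq n$ iff $\A_\to \vDash d_n \approx 1$. Combining this with the two translation steps yields the corollary.

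No step is a real obstacle. The only point needing care is to confirm that Remark \ref{Rem : filters} supplies the identification of the ordered poset, not merely of the underlying sets. It does: filters are ordered by inclusion in both settings.
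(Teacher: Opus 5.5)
Your proof is correct and takes the same route as the paper, which simply says the corollary is immediate from Remark~\ref{Rem : filters} and Theorem~\ref{Thm : depth}. You make explicit the checks the paper leaves implicit: the identification of the ordered posets of filters, the fact that $d_n$ and $1$ evaluate the same in $\A$ and in $\A_\to$, and the fact that $\A_\to$ is a Hilbert algebra.
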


\begin{proof}
    Immediate from Remark \ref{Rem : filters} and Theorem \ref{Thm : depth}.
\end{proof}

\bibliographystyle{plain}

\end{document}